\documentclass{article}
\usepackage{amsfonts}
\usepackage{amssymb}
\usepackage{amsmath}
\usepackage[center]{caption2}
\usepackage[verbose]{wrapfig}
\usepackage{geometry}

\setcounter{MaxMatrixCols}{10}

\newtheorem{theorem}{Theorem}

\newtheorem{definition}[theorem]{Definition}

\newtheorem{lemma}[theorem]{Lemma}
\newtheorem{notation}[theorem]{Notation}

\newtheorem{proposition}[theorem]{Proposition}
\newtheorem{remark}[theorem]{Remark}

\newenvironment{proof}[1][Proof]{\noindent\textbf{#1.} }{\ \rule{0.5em}{0.5em}}
\input{tcilatex}
\geometry{left=1.5in,right=1.5in,top=1.5in,bottom=1.5in}

\begin{document}

\title{Normality of Monomial Ideals}
\author{Ibrahim Al-Ayyoub}
\maketitle

\begin{abstract}
Given the monomial ideal $I=(x_{1}^{\alpha _{1}},\ldots ,x_{n}^{\alpha
_{n}})\subset K[x_{1},\ldots ,x_{n}]$ where $\alpha _{i}$ are positive
integers and $K$ a field and let $J$ be the integral closure of $I$ . It is
a challenging problem to translate the question of the normality of $J$ into
a question about the exponent set $\Gamma (J)$\ and the Newton polyhedron $%
NP(J)$. A relaxed version of this problem is to give necessary or sufficient
conditions on $\alpha _{1},\ldots ,\alpha _{n}$ for the normality of $J$. We
show that if $\alpha _{i}\in \{s,l\}$ with $s$ and $l$ arbitrary positive
integers, then $J$ is normal.
\end{abstract}

\section*{Introduction \ \ \ }

Let $I$ be an ideal in a Noetherian ring $R$. The integral closure of $I$ is
the ideal $\overline{I}$ that consists of all elements of $R$ that satisfy
an equation of the form 
\begin{equation*}
x^{n}+a_{1}x^{n-1}+\cdots +a_{n-1}x+a_{n}=0,\ \ \ \ \ a_{i}\in I^{i}
\end{equation*}%
The ideal $I$ is said to be integrally closed if $I=\overline{I}$. Clearly
one has that $I\subseteq \overline{I}\subseteq \sqrt{I}$. An ideal is called
normal if all of its positive powers are integrally closed. It is known that
if $R$ is a normal integral domain, then the Rees algebra $R[It]=\oplus
_{n\in N}I^{n}t^{n}$ is normal if and only if $I$ is a normal ideal of $R$ .
This brings up the importance of normality of ideals as the Rees algebra is
the algebraic counterpart of blowing up a scheme along a closed subscheme.

\ \ \ 

It is well known that the integral closure of monomial ideal in a polynomial
ring is again a monomial ideal, see \cite{Swanson-Huneke} or \cite{Vitulli}
for a proof. The problem of finding the integral closure for a monomial
ideal $I$ reduces to finding monomials $r$, integer $i$ and monomials $%
m_{1},m_{2},\ldots ,m_{i}$ in $I$ such that $r^{i}+m_{1}m_{2}\cdots m_{i}=0$%
, see \cite{Swanson-Huneke}. Geometrically, finding the integral closure of
monomial ideals $I$ in $R=K[x_{0},\ldots ,x_{n}]$ is the same as finding all
the integer lattice points in the convex hull $NP(I)$\ (the Newton
polyhedron of $I$) in $\mathbb{R}^{n}$ of $\Gamma (I)$ (the Newton polytope
of $I$) where $\Gamma (I)$ is the set of all exponent vectors of all the
monomials in $I$. This makes computing the integral closure of monomial
ideals simpler.

\ \ \ \ \ 

A power of an integrally closed monomial ideal need not be integrally
closed. For example, let $J$ be the integral closure of $%
I=(x^{4},y^{5},z^{7})\subset K[x,y,z]$. Then $J^{2}$ is not integrally
closed (observe that $y^{3}z^{3}\in J$ as $\left( y^{3}z^{3}\right)
^{5}=y^{5}y^{5}y^{5}z^{7}z^{8}\in I^{5}$. Now $x^{2}y^{4}z^{5}\in \overline{%
J^{2}}$ since $\left( x^{2}y^{4}z^{5}\right) ^{2}=\left( x^{4}\cdot
y^{5}\right) \left( y^{3}z^{3}\cdot z^{7}\right) \in \left( J^{2}\right)
^{2} $. On the other hand we used the algebra software Singular\ \cite%
{Singular} to show that $x^{2}y^{4}z^{5}\notin J^{2}$). However, a nice
result of Reid et al. \cite[Proposition 3.1]{Reid} states that if the first $%
n-1$ powers of a monomial ideal, in a polynomial ring of $n$ variables over
a field, are integrally closed, then the ideal is normal. For the case $n=2$
this follows from the celebrated theorem of Zariski \cite{ZS}\ that asserts
that the product of integrally closed ideals in a 2-dimensional regular ring
is again integrally closed.

\ \ \ \ \ \ \ 

In general, there is no good characterization for normal monomial ideals. It
is a challenging problem to translate the question of normality of a
monomial ideal $I$ into a question about the exponent set $\Gamma (I)$\ and
the Newton polyhedron $NP(I)$. Under certain hypotheses, some necessary
conditions are given. Faridi \cite{Faridi} gives necessary conditions on the
degree of the generators of a normal ideal in a graded domain. Vitulli \cite%
{Vitulli} investigated the normality for special monomial ideals in a
polynomial ring over a field.

\ \ \ 

For $\mathbf{\alpha =}(\alpha _{1},\ldots ,\alpha _{n})\in \mathbb{N}^{n}$
let $I(\mathbf{\alpha })$ be the integral closure of $(x_{1}^{\alpha
_{1}},\ldots ,x_{n}^{\alpha _{n}})\subset K[x_{1},\ldots ,x_{n}]$. Reid et.
al. \cite{Reid} showed that if $\mathbf{\alpha =(}\alpha _{1},\ldots ,\alpha
_{n})$\ with pairwise relatively prime entries, then the ideal $I(\mathbf{%
\alpha })$ is normal if and only if the additive submonoid $\Lambda
=\left\langle 1/\alpha _{1},\ldots ,1/\alpha _{n}\right\rangle $ of $\mathbb{%
Q}_{\geq }$ is quasinormal, that is, whenever $x\in \Lambda $ and $x\geq p$
for some $p\in \mathbb{N}$, there exist rational numbers $y_{1},\ldots
,y_{p} $ in $\Lambda $ with $y_{i}\geq 1$ for all $i$ such that $%
x=y_{1}+\cdots +y_{p}$. Thus for the case where $\alpha _{1},\ldots ,\alpha
_{n}$\ are pairwise relatively prime, the normality condition on the $n$%
-dimensional monoid is reduced to the quasinormality condition on the $1$%
-dimensional monoid. Another nice result of Reid et. al. \cite{Reid} is that
the monomial ideal $I(\mathbf{\alpha })$ is normal if $\gcd (\alpha
_{1},\ldots ,\alpha _{n})>n-2$. In particular, if $n=3$ and $\gcd (\alpha
_{1},\alpha _{2},\alpha _{3})\neq 1$, then $I(\mathbf{\alpha })$ is normal.
Therefore, in $k[x_{1},x_{2},x_{3}]$ it remains to investigate the normality
of $I(\mathbf{\alpha })$ whenever $\gcd (\alpha _{1},\alpha _{2},\alpha
_{3})=1$ and the integers are not pairwise relatively prime.

\ \ 

A important result of Reid et. al. \cite{Reid}, which we use to improve our
result in this paper, is as following. Choose $i$ and set $c=\func{lcm}%
(\alpha _{1},\ldots ,\widehat{\alpha _{i}},\ldots ,\alpha _{n})$. Put $%
\mathbf{\alpha }^{\prime }=\mathbf{(}\alpha _{1},\ldots ,\alpha
_{i-1},\alpha _{i}+c,\alpha _{i+1},\ldots ,\alpha _{n})$. If $I(\mathbf{%
\alpha }^{\prime })$ is normal then $I(\mathbf{\alpha })$ is normal.
Conversely, If $I(\mathbf{\alpha })$ is normal and $\alpha _{i}\geq c$, then 
$I(\mathbf{\alpha }^{\prime })$ is normal.

\ \ \ \ 

The goal of \ this paper is to show that the integral closure of the ideal $%
(x_{1}^{\alpha _{1}},\ldots ,x_{n}^{\alpha _{n}})\subset K[x_{1},\ldots
,x_{n}]$\ is normal provided that $\alpha _{i}\in \{s,l\}$ with $s$ and $l$
arbitrary positive integers. The\ following theorem provide us with a
technique that we mainly depend on to prove the integral closedness.

\begin{theorem}
\label{MainIC}(Proposition 15.4.1, \cite{Swanson-Huneke}) Let $I$ be a
monomial ideal in the polynomial ring $R=K[x_{1},\ldots ,x_{n}]$ with $K$ a
field. If $I$\ is primary to $(x_{1},\ldots ,x_{n})$ and $\overline{I}\cap
(I:(x_{1},\ldots ,x_{n}))\subseteq I,$ \ then $I$\ is integrally closed.
\end{theorem}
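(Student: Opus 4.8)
The plan is to establish the nontrivial inclusion $\overline{I}\subseteq I$, since $I\subseteq\overline{I}$ holds for every ideal and gives the reverse containment for free. Writing $\mathfrak{m}=(x_{1},\ldots,x_{n})$, I would first reduce to a purely combinatorial statement about monomials: because $I$ is a monomial ideal, its integral closure $\overline{I}$ is again a monomial ideal, so it suffices to show that every monomial lying in $\overline{I}$ already lies in $I$. I would argue by contradiction, assuming $\overline{I}\setminus I$ contains a monomial and manufacturing one that lands in $\overline{I}\cap(I:\mathfrak{m})\setminus I$, directly contradicting the hypothesis.

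The first substantive step is to exploit that $I$ is primary to $\mathfrak{m}$. This makes $R/I$ Artinian, hence a finite-dimensional $K$-vector space whose basis is precisely the (finite) set of monomials not in $I$. In particular the set of monomials in $\overline{I}\setminus I$ is finite. Finiteness is exactly what lets me select a monomial $m\in\overline{I}\setminus I$ that is maximal with respect to divisibility among all monomials of $\overline{I}\setminus I$.

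With such a maximal $m$ fixed, I would examine the products $m x_{i}$ for $i=1,\ldots,n$. Each $m x_{i}$ lies in $\overline{I}$ since $\overline{I}$ is an ideal and $m\in\overline{I}$. If some $m x_{i}$ failed to lie in $I$, then $m x_{i}\in\overline{I}\setminus I$ would be a proper multiple of $m$, contradicting maximality. Hence $m x_{i}\in I$ for every $i$, which is exactly the assertion that $m\in(I:\mathfrak{m})$. Therefore $m\in\overline{I}\cap(I:\mathfrak{m})$, and the hypothesis forces $m\in I$, contradicting $m\in\overline{I}\setminus I$; thus $\overline{I}=I$.

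I expect the main obstacle to be the justification of the maximal-monomial step rather than the colon computation, which is essentially formal. The existence of a divisibility-maximal monomial in $\overline{I}\setminus I$ depends crucially on the $\mathfrak{m}$-primary hypothesis: without it the set of monomials outside $I$ could be infinite and admit an infinite ascending chain under divisibility, so no maximal element need exist and the argument would collapse. Accordingly, the two points I would state carefully are the reduction to monomials (invoking that the integral closure of a monomial ideal is monomial) and the finite-dimensionality of $R/I$ that underwrites the choice of $m$.
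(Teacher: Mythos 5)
Your proof is correct. Note that there is no in-paper argument to compare it against: the paper quotes this statement verbatim from Swanson--Huneke (Proposition 15.4.1) and uses it as a black box, so your write-up supplies a proof the paper deliberately omits. Your argument is the standard one and every step checks out. Since $I$ is a monomial ideal, $\overline{I}$ is again a monomial ideal (a fact the paper itself invokes in the introduction, citing the same book), so $\overline{I}\subseteq I$ reduces to showing no monomial lies in $\overline{I}\setminus I$. Because $I$ is primary to $\mathfrak{m}=(x_{1},\ldots ,x_{n})$, some power $x_{i}^{N_{i}}$ lies in $I$ for each $i$, so every monomial outside $I$ has all exponents bounded; hence the set of monomials in $\overline{I}\setminus I$ is finite and, if nonempty, contains a divisibility-maximal element $m$. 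Then $mx_{i}\in \overline{I}$ for every $i$, and maximality forces $mx_{i}\in I$, i.e.\ $m\in (I:\mathfrak{m})$, so $m\in \overline{I}\cap (I:\mathfrak{m})\subseteq I$, a contradiction. You are also right about where the weight sits: the $\mathfrak{m}$-primary hypothesis is exactly what guarantees the maximal element exists, since without it the monomials outside $I$ can form infinite ascending chains under divisibility (e.g.\ all powers of $x_{1}$ when $I=(x_{2})$) and the selection step would fail.
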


\begin{proposition}
\label{IntegContain}(Corollary 5.3.2, \cite{Swanson-Huneke}) If $I\subseteq
J $ are ideals in a ring $R$, then $J\subseteq \overline{I}$ if and only if
each element in some generating set of $J$ is integral over $I$.
\end{proposition}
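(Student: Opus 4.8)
The plan is to prove the two implications separately, the forward one being essentially definitional and the reverse one being the substantive part, which I would reduce to the single assertion that $\overline{I}$ is an ideal of $R$. For the forward direction, suppose $J\subseteq\overline{I}$. Since $\overline{I}$ consists precisely of the elements of $R$ that are integral over $I$, every element of $J$ is integral over $I$; in particular every element of any generating set of $J$ is, which is more than required. For the reverse direction, suppose $J$ is generated by a set $\{g_{\lambda}\}_{\lambda\in\Lambda}$ in which each $g_{\lambda}$ is integral over $I$, i.e.\ $g_{\lambda}\in\overline{I}$. Once we know $\overline{I}$ is an ideal, it is an $R$-submodule of $R$ containing all the $g_{\lambda}$, hence it contains every $R$-linear combination of them; since $J=\sum_{\lambda}Rg_{\lambda}$, we get $J\subseteq\overline{I}$. (Note the hypothesis $I\subseteq J$ is not actually needed for the equivalence; it records the intended setting.) Thus the whole content is that $\overline{I}$ is an ideal.

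To establish that, I would check closure under multiplication by $R$ and under addition. Closure under multiplication is a direct manipulation of the integral equation: if $x^{m}+a_{1}x^{m-1}+\cdots+a_{m}=0$ with $a_{i}\in I^{i}$ and $r\in R$, then multiplying through by $r^{m}$ shows $(rx)^{m}+(ra_{1})(rx)^{m-1}+\cdots+(r^{m}a_{m})=0$, whose coefficient of $(rx)^{m-i}$ is $r^{i}a_{i}\in I^{i}$, so $rx\in\overline{I}$. The heart of the matter is closure under addition, for which I would use the module (determinant-trick) characterization: $x\in\overline{I}$ if and only if there is a finitely generated faithful $R$-module $M$ with $xM\subseteq IM$. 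One direction is Cayley--Hamilton: writing $M=(m_{1},\dots,m_{k})$ and $xm_{j}=\sum_{i}a_{ij}m_{i}$ with $a_{ij}\in I$, the identity $(x\,\delta_{ij}-a_{ij})(m_{i})=0$ yields $\det(x\,\mathrm{Id}_{k}-A)\,M=0$, and expanding the determinant gives a monic equation for $x$ whose coefficient of $x^{k-i}$ is a sum of $i\times i$ minors of $A=(a_{ij})$, each lying in $I^{i}$. The converse builds such an $M$ from a given integral equation. Given $x,y\in\overline{I}$ with associated modules $M,N$, I would pass to $M\otimes_{R}N$: from $xM\subseteq IM$ and $yN\subseteq IN$ one reads off $(x+y)(M\otimes_{R}N)\subseteq I(M\otimes_{R}N)$, and the characterization then gives $x+y\in\overline{I}$.

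The main obstacle is the bookkeeping around faithfulness in the presence of zerodivisors: the determinant trick only produces an element that annihilates the module, so one must guarantee the auxiliary modules --- above all the product module used for addition --- are faithful in order to upgrade ``annihilates $M$'' to ``equals $0$.'' I would either arrange the modules to contain a nonzerodivisor, or, to sidestep the issue entirely, replace the module argument by the Rees-algebra viewpoint: $\overline{I}$ is exactly the degree-one component of the integral closure of $R[It]$ inside $R[t]$, namely the degree-one piece of $\bigoplus_{n}\overline{I^{n}}t^{n}$. Since the integral closure of a ring in an overring is again a ring and respects the $\mathbb{N}$-grading, this degree-one component is automatically an $R$-submodule of $R$ closed under addition, i.e.\ an ideal. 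Either route reduces the proposition to standard facts that are already implicit in the definition of integral closure, after which the reverse direction follows immediately as described in the first paragraph.
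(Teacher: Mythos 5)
The paper does not prove this proposition at all: it is quoted verbatim as Corollary 5.3.2 of \cite{Swanson-Huneke}, so any argument you supply is necessarily your own route. Your overall architecture is the right one and is exactly how the quoted corollary is obtained in the source: the forward implication is definitional, and the reverse implication reduces to the single fact that $\overline{I}$ is an ideal (your observation that the hypothesis $I\subseteq J$ is not needed for the equivalence is also correct, as is your multiplication-by-$r^{m}$ computation for closure under $R$-scaling). The one genuine problem is the one you half-flag: the faithful-module criterion you state as an equivalence is \emph{not} an equivalence in an arbitrary ring, and the failure is in the converse you gesture at (``builds such an $M$ from a given integral equation''), not merely in bookkeeping. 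Concretely, take $R=K[u]/(u^{2})$, $I=(0)$, $x=u$: then $x^{2}=0$ is an equation of integral dependence with $a_{1}\in I$, $a_{2}\in I^{2}$, so $x\in\overline{I}$, yet $xM\subseteq IM=0$ forces $uM=0$, so no finitely generated faithful $M$ can exist, and ``arranging the modules to contain a nonzerodivisor'' cannot repair this since $u$ kills every candidate. Hence the tensor-product branch cannot be completed as stated in full generality. Your fallback branch, however, is sound and is the standard fix: for the \emph{ring} extension $R[It]\subseteq R[t]$ the determinant trick needs no faithfulness hypothesis, because the module $A[x,y]$ contains $1$; and the identification of the degree-one piece with $\overline{I}t$ is an elementary computation (decompose a monic equation for $rt$ over $R[It]$ into homogeneous components and read off the top-degree one). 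Indeed you need even less than you invoke: the nontrivial theorem that integral closure of a graded ring in a graded overring is graded can be skipped, since if $rt$ and $r't$ lie in $\overline{R[It]}\cap R[t]$ then so does $(r+r')t$, and extracting the degree-$n$ component of its integral equation gives $r+r'\in\overline{I}$ directly. For comparison, Swanson and Huneke avoid the faithfulness issue by a third device, reducing integrality modulo the minimal primes of $R$ to the domain case; either repair is legitimate. So: your proposal is correct provided the Rees-algebra branch is taken as the actual proof, and the module-theoretic branch should be demoted from ``alternative proof'' to ``motivation,'' since as an if-and-only-if it is false over rings with nilpotents.
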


\section*{Certain Normal Monomial Ideals}

Let $(x_{1}^{s},\ldots ,x_{m}^{s},y_{1}^{l},\ldots ,y_{n}^{l})\subset
K[x_{1},\ldots ,x_{m},y_{1},\ldots ,y_{n}]$ with $K$ a field, $x_{i}$ and $%
y_{i}$ indeterminates over $K$, and $s$ and $l$ positive integers such that
(without loss of generality) $l\geq s$.

\begin{notation}
For the remaining of this paper fix positive integers $s$ and $l$ with $%
l\geq s$ and let $\lambda _{a}=\left\lceil a\dfrac{l}{s}\right\rceil $ where 
$a$ is any integer. Also, let $k$ be any positive integer.
\end{notation}

Let $x$ and $y$ be positive integers and write $x=ts+r$ with $1\leq r\leq s$%
. Then $y\left\lceil \dfrac{x}{s}\right\rceil =y\dfrac{x+s-r}{s}=y\dfrac{s-r%
}{s}+y\dfrac{x}{s}\leq y\dfrac{s-r}{s}+\left\lceil y\dfrac{x}{s}\right\rceil 
$. Therefore, $\left\lceil y\dfrac{x}{s}\right\rceil \geq y\left(
\left\lceil \dfrac{x}{s}\right\rceil -\dfrac{s-r}{s}\right) $. This
inequality helps to prove the following lemma which is a key in this paper.

\begin{lemma}
\label{lemda-inequality}If\ $i\in \{0,1,\ldots ,ks\}$, then $%
kl(ks-i-1)+\lambda _{i}\geq (ks-i)(\lambda _{ks-1}-\frac{s-r}{s})$, where $%
(ks-1)l=ts+r$ with $1\leq r\leq s.$
\end{lemma}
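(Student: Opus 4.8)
The plan is to replace the awkward quantity $\lambda _{ks-1}-\frac{s-r}{s}$ by a clean closed form, after which the asserted inequality nearly collapses to an identity. The displayed inequality preceding the lemma, applied to $x=(ks-1)l$, is exactly the tool for this: writing $(ks-1)l=ts+r$ with $1\le r\le s$ one has $\left\lceil \frac{(ks-1)l}{s}\right\rceil -\frac{s-r}{s}=\frac{(ks-1)l}{s}$, because $\left\lceil \frac{ts+r}{s}\right\rceil =t+1$ (here the normalization $1\le r\le s$ is essential, so that $r/s\in (0,1]$ and the ceiling equals $t+1$ even when $r=s$), and then $t+1-\frac{s-r}{s}=t+\frac{r}{s}=\frac{ts+r}{s}$. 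Since $\lambda _{ks-1}=\left\lceil \frac{(ks-1)l}{s}\right\rceil $, this shows $\lambda _{ks-1}-\frac{s-r}{s}=\frac{(ks-1)l}{s}$, so the right-hand side of the lemma is precisely $\frac{(ks-i)(ks-1)l}{s}$.

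For the left-hand side I use only the trivial bound $\lambda _{i}=\left\lceil i\frac{l}{s}\right\rceil \ge i\frac{l}{s}$, which reduces the lemma to showing $kl(ks-i-1)+\frac{il}{s}\ge \frac{(ks-i)(ks-1)l}{s}$.

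The final step is to observe that this last inequality is in fact an equality. Dividing by the positive factor $\frac{l}{s}$ turns it into $ks(ks-i-1)+i\ge (ks-i)(ks-1)$, and expanding each side gives $(ks)^{2}-iks-ks+i$; thus the two sides coincide for every $i\in \{0,1,\dots ,ks\}$, the algebra remaining valid even at $i=ks$ where $ks-i-1=-1$. Chaining the three steps then yields $kl(ks-i-1)+\lambda _{i}\ge \frac{(ks-i)(ks-1)l}{s}=(ks-i)\left( \lambda _{ks-1}-\frac{s-r}{s}\right) $, with equality exactly when $i\frac{l}{s}\in \mathbb{Z}$. I do not anticipate a serious obstacle here: the only genuine insight is the reformulation of the right-hand side in the first step, and once that is in place the estimate degenerates to the trivial ceiling bound together with an elementary polynomial identity; the main thing to watch is the $1\le r\le s$ (rather than $0\le r\le s-1$) remainder convention, which is exactly what makes $\lambda _{ks-1}-\frac{s-r}{s}$ simplify.
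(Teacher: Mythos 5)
Your proof is correct, and it is essentially the paper's own argument reorganized: the paper absorbs the integer $kl(ks-i-1)$ into the ceiling, rewrites $ks(ks-i-1)+i=(ks-i)(ks-1)$ inside it, and then quotes the displayed pre-lemma inequality with $y=ks-i$, $x=(ks-1)l$, whereas you unpack that same display into the exact identity $\lambda_{ks-1}-\frac{s-r}{s}=\frac{(ks-1)l}{s}$ and finish with the trivial bound $\lambda_{i}\geq i\frac{l}{s}$ plus the same polynomial identity. Since the pre-lemma inequality is precisely the trivial ceiling bound in that disguise, the two proofs coincide mathematically; yours has the minor virtue of exposing the equality case ($i\frac{l}{s}\in\mathbb{Z}$).
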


\begin{proof}
By the note before the lemma we have\ $kl(ks-i-1)+\lambda _{i}=\left\lceil 
\frac{[ks(ks-i-1)+i]l}{s}\right\rceil =\left\lceil (ks-i)\frac{(ks-1)l}{s}%
\right\rceil \geq (ks-i)\left( \left\lceil \frac{(ks-1)l}{s}\right\rceil -%
\frac{s-r}{s}\right) =(ks-i)\left( \lambda _{ks-1}-\frac{s-r}{s}\right) $.
\end{proof}

\begin{definition}
\label{J_k&F_k}Let $F_{k}=\{x_{i_{1}}\cdots x_{i_{ks-a}}y_{j_{1}}\cdots
y_{j_{\lambda _{a}}}\mid a=0,1,2,\ldots ,ks$, $1\leq i_{1}\leq i_{2}\leq
\cdots \leq i_{ks-a}\leq m$, and $1\leq j_{1}\leq j_{2}\leq \cdots \leq
j_{\lambda _{a}}\leq n\}$, $J_{k}$ the ideal generated by all the monomials
in $F_{k}$, and $I_{k}=(x_{1}^{ks},\ldots ,x_{m}^{ks},y_{1}^{kl},\ldots
,y_{n}^{kl})\subset K[x_{1},\ldots ,x_{m},y_{1},\ldots ,y_{n}]$. Also, let $%
J=J_{1}$, $F=F_{1}$, and $I=I_{1}$.
\end{definition}

\begin{lemma}
\label{IntegralMono} $J_{k}$ is integral over the ideal $I_{k}$, that is, $%
J_{k}\subseteq \overline{I_{k}}$.
\end{lemma}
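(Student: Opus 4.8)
The plan is to reduce the containment $J_{k}\subseteq \overline{I_{k}}$ to a statement about the individual generators of $J_{k}$ and then verify integrality one monomial at a time. By Proposition \ref{IntegContain} it suffices to show that every element of the generating set $F_{k}$ of $J_{k}$ is integral over $I_{k}$. For a monomial ideal this integrality has a convenient form: a monomial $u$ is integral over $I_{k}$ as soon as $u^{q}\in I_{k}^{q}$ for some positive integer $q$ (this is exactly the relation $r^{q}=-m_{1}\cdots m_{q}$ with $m_{1},\ldots ,m_{q}\in I_{k}$ recalled in the Introduction). So the whole argument comes down to producing, for each generator, a single power that falls into the corresponding power of $I_{k}$.

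Fix a generator $u=x_{i_{1}}\cdots x_{i_{ks-a}}y_{j_{1}}\cdots y_{j_{\lambda _{a}}}\in F_{k}$ with $a\in \{0,1,\ldots ,ks\}$, and let $c_{i}$ (resp. $d_{j}$) denote the multiplicity of $x_{i}$ (resp. $y_{j}$) in $u$, so that $\sum_{i=1}^{m}c_{i}=ks-a$ and $\sum_{j=1}^{n}d_{j}=\lambda _{a}$. I would take the power $q=ksl$, chosen precisely so that both $ks$ and $kl$ divide $q$ and every exponent that appears divides evenly, avoiding any rounding loss. Then
$$u^{ksl}=\prod_{i=1}^{m}x_{i}^{\,ksl\,c_{i}}\prod_{j=1}^{n}y_{j}^{\,ksl\,d_{j}}=\prod_{i=1}^{m}\bigl(x_{i}^{ks}\bigr)^{lc_{i}}\prod_{j=1}^{n}\bigl(y_{j}^{kl}\bigr)^{sd_{j}},$$
which exhibits $u^{ksl}$ as a product of generators of $I_{k}$. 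The number of generators appearing is $\sum_{i}lc_{i}+\sum_{j}sd_{j}=l(ks-a)+s\lambda _{a}$.

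The one inequality that needs to be checked is that this count is at least $ksl$, for then $u^{ksl}\in I_{k}^{\,l(ks-a)+s\lambda _{a}}\subseteq I_{k}^{ksl}$ and integrality follows. Since $\lambda _{a}=\lceil a\,l/s\rceil \geq a\,l/s$, we get
$$l(ks-a)+s\lambda _{a}\geq l(ks-a)+s\cdot \frac{al}{s}=ksl,$$
as required. This completes the plan: $u^{ksl}\in I_{k}^{ksl}$, so each generator of $J_{k}$ is integral over $I_{k}$, and Proposition \ref{IntegContain} yields $J_{k}\subseteq \overline{I_{k}}$.

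Conceptually, the inequality $\tfrac{ks-a}{ks}+\tfrac{\lambda _{a}}{kl}\geq 1$ underlying this computation just says that the exponent vector of every monomial in $F_{k}$ lies on or above the single slanted facet of the Newton polyhedron $NP(I_{k})$, which is all that is needed since $I_{k}$ is generated by pure powers on the coordinate axes. The only real point of care is the choice of the power $q=ksl$: taking it to be a common multiple of $ks$ and $kl$ makes the floor functions implicit in membership of $I_{k}^{q}$ exact, so the elementary bound $\lambda _{a}\geq al/s$ transfers with no loss. I expect no genuine obstacle beyond this bookkeeping; the sharper Lemma \ref{lemda-inequality} is not needed here and is presumably reserved for the reverse inclusion.
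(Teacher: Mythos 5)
Your proposal is correct and follows essentially the same route as the paper: both reduce to the generators via Proposition \ref{IntegContain}, raise each monomial of $F_{k}$ to the power $ksl$, factor the result as a product of $l(ks-a)+s\lambda _{a}$ generators of $I_{k}$, and conclude from $s\lambda _{a}\geq al$ that this count is at least $ksl$. Your closing remark is also accurate: the paper invokes Lemma \ref{lemda-inequality} only in the proof of Lemma \ref{MainLemma}, not here.
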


\begin{proof}
By Proposition \ref{IntegContain} it suffices to show that every element of $%
F_{k}$ is integral over $I_{k}$. Note $x_{i_{1}}^{ksl}\cdots
x_{i_{ks-a}}^{ksl}$ $\in I_{k}^{l(ks-a)}$ and $y_{j_{1}}^{ksl}\cdots
y_{j_{\lambda _{a}}}^{ksl}\in I_{k}^{s\lambda _{a}}$. Also note $%
l(ks-a)+s\lambda _{a}=ksl-la+s$ $\left\lceil a\dfrac{l}{s}\right\rceil \geq
ksl.$\ Therefore, $\left( x_{i_{1}}\cdots x_{i_{ks-a}}y_{j_{1}}\cdots
y_{j_{\lambda _{a}}}\right) ^{ksl}\in I_{k}^{ksl}$.
\end{proof}

\ 

The figure below is an illustration of $J_{3}$ $\subset $ $K[x,y,z]$ with $%
s=2$, $l=7$ and $I=(x^{s},y^{s},z^{l})$. In this case $%
I_{3}=(x^{3s},y^{3s},z^{3l})=(x^{6},y^{6},z^{21})$ and $F_{3}=%
\{x^{i}y^{j}z^{\lambda _{6-(i+j)}}\mid i+j=0,1,2,3,4,5,6$ \textit{and} $%
\lambda _{a}=\left\lceil \frac{7a}{2}\right\rceil $ $\}$. The elements of $%
F_{3}$ are represented by black circles. From the figure it is clear that
the set $F_{3}$ minimally generates $\overline{I_{3}}$.%
\begin{equation*}
\FRAME{itbpF}{5.8193in}{3.4212in}{0in}{}{}{Figure}{\special{language
"Scientific Word";type "GRAPHIC";maintain-aspect-ratio TRUE;display
"USEDEF";valid_file "T";width 5.8193in;height 3.4212in;depth
0in;original-width 5.7605in;original-height 3.3754in;cropleft "0";croptop
"1";cropright "1";cropbottom "0";tempfilename
'L872DP00.wmf';tempfile-properties "XPR";}}
\end{equation*}%
\ \ \ \ \ \ \ \ \ \ \ \ \ 

Later we will prove that $J_{k}$ is the integral closure of $I_{k}.$

\begin{lemma}
\label{J^k=J_k}$J^{k}=J_{k}$.
\end{lemma}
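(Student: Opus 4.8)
The plan is to prove the two inclusions $J^{k}\subseteq J_{k}$ and $J_{k}\subseteq J^{k}$ separately, exploiting that both are monomial ideals, so it suffices to compare their monomial generators under divisibility. Since $J=J_{1}$ is generated by $F=F_{1}$, the ideal $J^{k}$ is generated by all products $g_{1}\cdots g_{k}$ with each $g_{t}\in F$; such a $g_{t}$ is determined by a parameter $a_{t}\in\{0,1,\ldots,s\}$ and carries $x$-degree $s-a_{t}$ and $y$-degree $\lambda_{a_{t}}$. Setting $A=\sum_{t=1}^{k}a_{t}\in\{0,1,\ldots,ks\}$, the product $g_{1}\cdots g_{k}$ has $x$-degree $ks-A$ and $y$-degree $\sum_{t=1}^{k}\lambda_{a_{t}}$.

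For $J^{k}\subseteq J_{k}$ I would use the superadditivity of the ceiling function, $\lceil u\rceil+\lceil v\rceil\geq\lceil u+v\rceil$, to deduce $\sum_{t=1}^{k}\lambda_{a_{t}}\geq\lambda_{A}$. The generator of $F_{k}$ with parameter $a=A$ has exactly $ks-A$ many $x$-variables and $\lambda_{A}$ many $y$-variables; taking its $x$-part to be the full $x$-part of $g_{1}\cdots g_{k}$ and its $y$-part to be any sub-multiset of the $y$-part of size $\lambda_{A}$ (which exists by the inequality) produces an element of $F_{k}$ dividing $g_{1}\cdots g_{k}$. Hence every generator of $J^{k}$ lies in $J_{k}$.

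For the reverse inclusion $J_{k}\subseteq J^{k}$ the crux is to realize a given generator of $F_{k}$, having $x$-degree $ks-a$ and $y$-degree $\lambda_{a}$ for some $a\in\{0,\ldots,ks\}$, as an \emph{exact} product of $k$ elements of $F$. I would write $a=qs+\rho$ with $0\leq\rho<s$, and take as parameters $q$ copies of $s$, one copy of $\rho$ when $\rho>0$, padded with zeros up to length $k$; this fits since $a\leq ks$ forces $q\leq k$ (and $q<k$ whenever $\rho>0$). The resulting product has $x$-degree $ks-a$, and, using $\lambda_{s}=l$ together with the identity $\lambda_{qs+\rho}=ql+\lambda_{\rho}$ (valid because $ql$ is an integer and may be pulled outside the ceiling), its $y$-degree equals $ql+\lambda_{\rho}=\lambda_{a}$. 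Since both the $x$- and $y$-degrees match exactly, one can distribute the specific $x$- and $y$-variables of the chosen generator among the $k$ factors so that their product equals that generator, placing it in $J^{k}$.

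The main obstacle is precisely this exact matching in the reverse direction: superadditivity only yields $\sum_{t}\lambda_{a_{t}}\geq\lambda_{a}$, so a careless decomposition of $a$ would overshoot the available $y$-degree $\lambda_{a}$ and fail to divide the generator. The resolution is that the greedy decomposition of $a$ into blocks of size $s$ (plus a single remainder block) is tight, turning the superadditive inequality into the equality $\sum_{t}\lambda_{a_{t}}=\lambda_{a}$ through $\lambda_{qs+\rho}=ql+\lambda_{\rho}$; verifying this identity, and the bookkeeping that $k$ factors suffice, is the only computation required.
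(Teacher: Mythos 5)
Your proof is correct and is essentially the paper's own argument: the inclusion $J^{k}\subseteq J_{k}$ rests on the subadditivity $\lambda _{a+b}\leq \lambda _{a}+\lambda _{b}$, and the reverse inclusion on the exact identity $\lambda _{qs+\rho }=ql+\lambda _{\rho }$ realized by the greedy decomposition of $a$ into blocks of size $s$. The only difference is packaging: the paper proves $J_{k}J=J_{k+1}$ and lets induction on $k$ do the unrolling (its case split $a\geq ks$ versus $a<ks$ peels off exactly one factor of your decomposition at a time), whereas you exhibit the full $k$-fold factorization at once.
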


\begin{proof}
We show $J_{k}J=J_{k+1}$. Let $x_{i_{1}}\cdots x_{i_{s-a}}y_{j_{1}}\cdots
y_{j_{\lambda _{a}}}\in F$ and $x_{i_{1}}\cdots x_{i_{ks-b}}y_{j_{1}}\cdots
y_{j_{\lambda _{b}}}\in F_{k}$. Multiplying these two monomials we get $%
x_{h_{1}}\cdots x_{h_{(k+1)s-(b+a)}}y_{t_{1}}\cdots y_{t_{\lambda
_{a}+\lambda _{b}}}$ (with $1\leq h_{1}\leq h_{2}\leq \ldots \leq m$ and $%
1\leq t_{1}\leq t_{2}\leq \ldots \leq n$). This is a multiple of $%
x_{h_{1}}\cdots x_{h_{(k+1)s-(b+a)}}y_{t_{1}}\cdots y_{t_{\lambda
_{a+b}}}\in J_{k+1}$ as $\lambda _{a+b}\leq \lambda _{a}+\lambda _{b}$. To
show the other inclusion let $x_{i_{1}}\cdots
x_{i_{(k+1)s-a}}y_{j_{1}}\cdots y_{j_{\lambda _{a}}}\in F_{k+1}$. If $a\geq
ks$, write $a=ks+r$ with $0\leq r\leq s$, then $\lambda _{a}=\lambda
_{ks+r}=\left\lceil (ks+r)\dfrac{l}{s}\right\rceil =kl+\lambda _{r}$. Thus
this monomial equals $x_{i_{1}}\cdots x_{i_{s-r}}y_{j_{1}}\cdots
y_{j_{\lambda _{r}+kl}}$. But $y_{j_{1}}\cdots y_{j_{kl}}\in F_{k}$ and $%
x_{i_{1}}\cdots x_{i_{s-r}}y_{j_{kl+1}}\cdots y_{j_{kl+\lambda _{r}}}\in F$
as $0\leq s-r\leq s$. If $a<ks$, then $x_{i_{1}}\cdots
x_{i_{(k+1)s-a}}y_{j_{1}}\cdots y_{j_{\lambda _{a}}}=x_{t_{1}}\cdots
x_{t_{s}}x_{h_{1}}\cdots x_{h_{ks-a}}y_{j_{1}}\cdots y_{j_{\lambda _{a}}}\in
JJ_{k}$ as $x_{t_{1}}\cdots x_{t_{s}}\in J$ and $x_{h_{1}}\cdots
x_{h_{ks-a}}y_{j_{1}}\cdots y_{j_{\lambda _{a}}}\in J_{k}$.
\end{proof}

\ \ \ \ \ \ \ \ 

The main goal of this paper is to prove the following theorem

\begin{theorem}
\label{MainThm}The integral closure of the ideal $(x_{1}^{\alpha
_{1}},\ldots ,x_{n}^{\alpha _{n}})\subset K[x_{1},\ldots ,x_{n}]$\ is
normal, where $\alpha _{i}\in \{s,l\}$ with $s$ and $l$ arbitrary positive
integers. Or equivalently, the ideal $J$ is normal.
\end{theorem}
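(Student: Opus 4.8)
The plan is to prove normality of $J$ by showing that every power $J^{k}$ is integrally closed, and the strategy is to combine the three preparatory lemmas already established. By Lemma~\ref{J^k=J_k} we have $J^{k}=J_{k}$, so it suffices to prove that each ideal $J_{k}$ is integrally closed. To do this I would invoke the reduction criterion of Theorem~\ref{MainIC}: since $J_{k}$ is generated by monomials in the variables $x_{1},\ldots,x_{m},y_{1},\ldots,y_{n}$ and (as one checks) is primary to the maximal ideal $\mathfrak{m}=(x_{1},\ldots,x_{m},y_{1},\ldots,y_{n})$, it is enough to verify the containment $\overline{J_{k}}\cap(J_{k}:\mathfrak{m})\subseteq J_{k}$.

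To carry this out I would first identify $\overline{J_{k}}$ concretely. Lemma~\ref{IntegralMono} gives $J_{k}\subseteq\overline{I_{k}}$, and I expect the reverse-type analysis (via the Newton polyhedron description of integral closure of monomial ideals recalled in the introduction) to show in fact that $J_{k}=\overline{I_{k}}$, so that $\overline{J_{k}}=\overline{I_{k}}=J_{k}$ is actually the statement to be proven by the $\mathfrak{m}$-criterion rather than something to be assumed. Concretely, a monomial $x_{1}^{a_{1}}\cdots x_{m}^{a_{m}}y_{1}^{b_{1}}\cdots y_{n}^{b_{n}}$ lies in $\overline{I_{k}}$ precisely when its exponent vector lies in $NP(I_{k})$, i.e. when $\frac{\sum a_{i}}{ks}+\frac{\sum b_{j}}{kl}\geq 1$; and I would show this lattice condition forces the monomial to be divisible by one of the generators in $F_{k}$, using the defining relation $\lambda_{a}=\lceil a\,l/s\rceil$ between the $x$-degree deficit and the required $y$-degree. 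The role of Lemma~\ref{lemda-inequality} is exactly to control these ceiling quantities: it provides the arithmetic inequality needed to match a lattice point satisfying the polyhedral constraint against a generator of $F_{k}$, handling the fractional parts $(s-r)/s$ that arise from the non-integrality of $l/s$.

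For the $\mathfrak{m}$-criterion itself I would take an arbitrary monomial $w\in\overline{J_{k}}\cap(J_{k}:\mathfrak{m})$ and show $w\in J_{k}$. The condition $w\in(J_{k}:\mathfrak{m})$ means every $x_{i}w$ and $y_{j}w$ lies in $J_{k}$, so each such product is divisible by some generator in $F_{k}$; combined with $w\in\overline{J_{k}}$ (so the exponent vector of $w$ satisfies the Newton-polyhedron inequality above), I would argue that $w$ itself must already be divisible by a generator of $F_{k}$. The heart of the argument is a counting/rounding computation: writing $w=x_{i_{1}}\cdots x_{i_{p}}y_{j_{1}}\cdots y_{j_{q}}$ with $p=ks-a$ parts of $x$-type shortfall, the polyhedral condition gives a lower bound on $q$, and I would show this bound is exactly $\lambda_{a}$ (or more), placing $w\in F_{k}$-multiples.

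The main obstacle I anticipate is the interplay between the ceiling function and the $\mathfrak{m}$-saturation condition when $l/s$ is not an integer. The generators $F_{k}$ are defined using $\lambda_{a}=\lceil a\,l/s\rceil$, and the subtlety is that knowing $x_{i}w$ and $y_{j}w$ are in $J_{k}$ gives information about $\lceil(\cdot)l/s\rceil$ at shifted arguments, and one must rule out a ``gap'' monomial $w$ that becomes a generator only after multiplication by every variable yet is not itself a multiple of any generator. Precisely this boundary behavior is what Lemma~\ref{lemda-inequality} is engineered to exclude, so I would organize the final step around a careful case split on the argument $a$ modulo $s$ (equivalently on the remainder $r$ in $(ks-1)l=ts+r$), applying the lemma to close each case.
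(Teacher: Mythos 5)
Your plan is correct, but the engine of your argument is genuinely different from the paper's. Both proofs share the same skeleton: reduce, via Lemma \ref{J^k=J_k}, to showing that each $J_{k}$ is integrally closed, framed by the colon criterion of Theorem \ref{MainIC}. The paper carries out that verification (Lemma \ref{MainLemma}) by contradiction: it computes the generators of $(J_{k}:(x_{1},\ldots ,x_{m},y_{1},\ldots ,y_{n}))/J_{k}$ in Remark \ref{Qideal}, assumes such a generator $\sigma $ lies in $\overline{J_{k}}$, writes $\sigma ^{d}\in J_{k}^{d}$ as an explicit product of elements of $F_{k}$, and plays three degree counts against each other, with Lemma \ref{lemda-inequality} supplying the decisive inequality. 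You instead invoke the geometric description of integral closure: since $I_{k}\subseteq J_{k}\subseteq \overline{I_{k}}$ (Lemma \ref{IntegralMono}) gives $\overline{J_{k}}=\overline{I_{k}}$, and $\overline{I_{k}}$ consists of the monomials $x^{a}y^{b}$ with $\sum a_{i}/(ks)+\sum b_{j}/(kl)\geq 1$, everything follows from the rounding step you sketch at the end, which does work and needs no case analysis: if $\sum a_{i}=ks-c$ with $0<c\leq ks$, the inequality forces $\sum b_{j}\geq cl/s$, hence $\sum b_{j}\geq \lceil cl/s\rceil =\lambda _{c}$ by integrality, so $x^{a}y^{b}$ is divisible by a member of $F_{k}$, giving $\overline{J_{k}}=\overline{I_{k}}\subseteq J_{k}$ outright. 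Your route is shorter and proves along the way the paper's side claim that $F_{k}$ generates $\overline{I_{k}}$; its cost is reliance on the Newton-polyhedron characterization of integral closures of monomial ideals, which the paper cites in the introduction but never actually uses, keeping its own proof self-contained modulo Theorem \ref{MainIC}. Two ingredients you carry along are in fact superfluous on your route: the saturation hypothesis $w\in (J_{k}:\mathfrak{m})$ (and hence Theorem \ref{MainIC} itself) is never needed once $\overline{J_{k}}\subseteq J_{k}$ is proved directly, and Lemma \ref{lemda-inequality}, which you expect to resolve the ceiling subtleties, is tailored to the paper's power-expansion argument and plays no role in yours --- plain integrality does all the rounding.
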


By Lemma \ref{IntegralMono} and since $I_{k}\subseteq J_{k}$ we have 
\begin{equation*}
I_{k}\subseteq J_{k}\subseteq \overline{I_{k}}\subseteq \overline{J_{k}}
\end{equation*}%
We will use Theorem \ref{MainIC} to show that $J_{k}$\ is integrally closed,
hence $J_{k}$ is the integral closure of $I_{k}$. Therefore we need the
following.

\begin{remark}
\label{Qideal}Let $R=K[x_{1},\ldots ,x_{m},y_{1},\ldots ,y_{n}]$. For $1\leq
i\leq m$, it is easy to see that $(J_{k}:(x_{i}))/J_{k}$ is generated by $%
\{z_{i_{1}}\cdots z_{i_{ks-a-1}}w_{j_{1}}\cdots w_{j_{_{\lambda _{a}}}}\mid
a=0,\ldots ,ks-1$; $1\leq i_{1}\leq i_{2}\leq \cdots \leq i_{ks-a-1}\leq m$
and $1\leq j_{1}\leq j_{2}\leq \cdots \leq j_{\lambda _{a}}\leq n\}$ where $%
z_{i}$ and $w_{i}$ are the images of $x_{i}$ and $y_{i}$, respectively, in $%
R/J_{k}$. Also, for $1\leq j\leq n$ note that $(J_{k}:(y_{j}))/J_{k}$ is
generated by $\{z_{i_{1}}\cdots z_{i_{ks-b}}w_{_{j_{1}}}\cdots
w_{_{j_{_{\lambda _{b}}}-1}}\mid b=1,\ldots ,ks$; $1\leq i_{1}\leq i_{2}\leq
\cdots \leq i_{ks-b}\leq m$ and $1\leq j_{1}\leq j_{2}\leq \cdots \leq
j_{\lambda _{b}}\leq n\}$. As the intersection of two monomial ideals is
generated by the set of the least common multiples of the generators of the
two ideals, it follows that $(J_{k}:(x_{1},\ldots ,x_{m},y_{1},\ldots
,y_{n}))/J_{k}$ is generated by$\ \{z_{i_{1}}\cdots
z_{i_{ks-e}}w_{_{j_{1}}}\cdots w_{_{j_{_{\lambda _{e}}}-1}}\mid e=1,\ldots
,ks$; $1\leq i_{1}\leq i_{2}\leq \cdots \leq i_{ks-e}\leq m$ and $1\leq
j_{1}\leq j_{2}\leq \cdots \leq j_{\lambda _{e}}\leq n\}$.
\end{remark}

\begin{lemma}
\label{MainLemma}The ideal $J_{k}$ is integrally closed.
\end{lemma}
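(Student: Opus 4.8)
The plan is to verify the two hypotheses of Theorem \ref{MainIC} for the ideal $J_k$, with respect to the maximal ideal $\mathfrak{m}=(x_1,\ldots,x_m,y_1,\ldots,y_n)$. First I would check that $J_k$ is primary to $\mathfrak{m}$: since $I_k\subseteq J_k$ and $I_k$ already contains the pure powers $x_i^{ks}$ and $y_j^{kl}$, we get $\sqrt{J_k}=\mathfrak{m}$, so $J_k$ is $\mathfrak{m}$-primary. It then remains to establish the inclusion $\overline{J_k}\cap(J_k:\mathfrak{m})\subseteq J_k$, and this is where the real work lies.

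The second step is to pin down $\overline{J_k}$. Because $I_k\subseteq J_k\subseteq\overline{I_k}$ by Lemma \ref{IntegralMono}, idempotency of integral closure gives $\overline{J_k}=\overline{I_k}$. Since $I_k$ is generated by pure powers of the variables, the exponents of all its generators lie on a single hyperplane, and the Newton polyhedron description recalled in the introduction shows that a monomial $x_1^{c_1}\cdots x_m^{c_m}y_1^{d_1}\cdots y_n^{d_n}$ lies in $\overline{I_k}$ precisely when, writing $p=\sum_i c_i$ and $q=\sum_j d_j$,
\begin{equation*}
\frac{p}{ks}+\frac{q}{kl}\geq 1 .
\end{equation*}
I would record this membership criterion as the computational heart of the argument.

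The third step uses Remark \ref{Qideal}. The monomials lying in $(J_k:\mathfrak{m})$ but not in $J_k$ are exactly those of the form $x_{i_1}\cdots x_{i_{ks-e}}y_{j_1}\cdots y_{j_{\lambda_e-1}}$ with $e\in\{1,\ldots,ks\}$, so such a monomial has $p=ks-e$ and $q=\lambda_e-1$, one short of the threshold $\lambda_e$ that would place it in $J_k$. (Here it is worth noting that multiplying any such monomial by a variable returns it to $J_k$, using $\lambda_e\geq\lambda_{e-1}+1$, so these generators genuinely exhaust $(J_k:\mathfrak{m})\setminus J_k$ and it suffices to test them.) Substituting into the criterion, I would compute
\begin{equation*}
\frac{ks-e}{ks}+\frac{\lambda_e-1}{kl}\geq 1\iff \lambda_e-1\geq\frac{le}{s}.
\end{equation*}
But $\lambda_e=\left\lceil\frac{le}{s}\right\rceil<\frac{le}{s}+1$, hence $\lambda_e-1<\frac{le}{s}$ and the displayed inequality fails. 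Thus no monomial of $(J_k:\mathfrak{m})\setminus J_k$ lies in $\overline{I_k}=\overline{J_k}$, so $\overline{J_k}\cap(J_k:\mathfrak{m})\subseteq J_k$, and Theorem \ref{MainIC} yields that $J_k$ is integrally closed.

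I expect the main obstacle to be the second step: justifying the exact Newton-polyhedron membership criterion for $\overline{I_k}$, together with correctly reading off from Remark \ref{Qideal} that every monomial of $(J_k:\mathfrak{m})\setminus J_k$ has $y$-degree exactly $\lambda_e-1$. Once the criterion and this ``one short'' description are in place, the final inequality $\lambda_e-1<le/s$ is immediate from $\lambda_e=\lceil le/s\rceil$. I would also double-check the boundary cases ($e=ks$, or $p\geq ks$, where the threshold $\lambda_{ks-p}$ should be read as $0$) to be sure the criterion is applied consistently throughout.
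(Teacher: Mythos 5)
Your proof is correct, but it reaches the key contradiction by a genuinely different route than the paper. Both arguments start the same way: invoke Theorem \ref{MainIC} and Remark \ref{Qideal} to reduce the claim to showing that no monomial $x_{i_1}\cdots x_{i_{ks-e}}y_{j_1}\cdots y_{j_{\lambda_e-1}}$, $1\le e\le ks$, lies in $\overline{J_k}$. The paper does this by hand inside $J_k$ itself: it expands $\sigma^d\in J_k^d$ as an explicit product of generators of $J_k$, extracts the degree counts (\ref{(1)}), (\ref{(2)}), (\ref{(3)}), and rules them out using the ceiling estimate of Lemma \ref{lemda-inequality}. You instead first collapse $\overline{J_k}$ to $\overline{I_k}$ via $I_k\subseteq J_k\subseteq\overline{I_k}$ (Lemma \ref{IntegralMono}) and idempotency of integral closure, and then exploit the fact that $I_k$ is generated by pure powers, so membership of a monomial in $\overline{I_k}$ forces the single linear inequality $p/(ks)+q/(kl)\ge 1$ on its $x$- and $y$-degrees; the test monomials fail it because $\lambda_e-1<el/s$. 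This is shorter and more conceptual: it eliminates Lemma \ref{lemda-inequality} and all the bookkeeping, at the price of invoking the Newton-polyhedron description of integral closures of monomial ideals (standard, and cited in the paper's introduction) plus idempotency. Note also that you only need the easy direction of your ``precisely when'' criterion: if $u\in\overline{I_k}$ then $u^N=\beta\,g_1\cdots g_N$ with each $g_t$ a pure-power generator of $I_k$, and weighting each $x_i$ by $1/(ks)$ and each $y_j$ by $1/(kl)$ yields the inequality immediately, so the ``computational heart'' you worried about is a two-line verification. It is worth observing that both proofs ultimately rest on the same numerical fact, $\lambda_e-1<el/s$ (this is exactly the paper's final step $(el/s)d>(\lambda_e-1)d$); the difference is that you reach it through the half-space criterion for the simple ideal $I_k$, whereas the paper in effect re-derives a weaker form of that criterion by direct combinatorics on the many generators of $J_k$.
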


\begin{proof}
By Theorem \ref{MainIC} we need to show that none of the preimages, in $%
K[x_{1},\ldots ,x_{m},y_{1},\ldots ,y_{n}]$, of the monomial generators of $%
(J_{k}:(x_{1},\ldots ,x_{m},y_{1},\ldots ,y_{n}))/J_{k}$ is in $\overline{%
J_{k}}$. Assume not, that is, assume $\sigma =x_{i_{1}}\cdots
x_{i_{ks-e}}y_{_{j_{1}}}\cdots y_{_{j_{_{\lambda _{e}}}-1}}\in \overline{%
J_{k}}$ for some $e\in \{1,\ldots ,ks\}$. This implies $\sigma ^{d}\in
J_{k}^{d}$ for some positive integer $d$, thus $\sigma ^{d}=$ $%
x_{i_{1}}^{d}x_{i_{2}}^{d}\cdots x_{i_{ks-e}}^{d}y_{_{j_{1}}}^{d}\cdots
y_{_{j_{_{\lambda _{e}}}-1}}^{d\ }$equals the following product of products
of the generators of $J_{k}$

\ \ \ \ \ \ 

$\beta \ \tprod\limits_{1\leq j_{1}\leq \cdots \leq j_{kl}\leq n}\left(
y_{j_{1}}y_{j_{2}}\cdots y_{j_{kl}}\right) ^{c_{j_{1},\ldots ,j_{kl}}}$

$\ \tprod\limits_{\substack{ 1\leq i_{1}\leq m  \\ 1\leq j_{1}\leq \cdots
\leq j_{\lambda _{ks-1}}\leq n}}(x_{i_{1}}y_{j_{1}}y_{j_{2}}\cdots
y_{j_{\lambda _{ks-1}}})^{l_{i_{1},j_{1},\ldots ,j_{\lambda _{ks-1}}}}$

$\ \prod\limits_{\substack{ 1\leq i_{1}\leq i_{2}\leq m  \\ 1\leq j_{1}\leq
\cdots \leq j_{\lambda _{ks-2}}\leq n}}(x_{i_{1}}x_{i_{2}}y_{j_{1}}y_{j_{2}}%
\cdots y_{j_{\lambda _{ks-2}}})^{l_{i_{1},i_{2},j_{1},\ldots ,j_{\lambda
_{ks-2}}}}$

$\ \ \ \ \ \ \ \ \vdots $

$\prod\limits_{\substack{ 1\leq 1i_{1}\leq i_{2}\leq \cdots \leq
i_{ks-2}\leq m  \\ 1\leq j_{1}\leq \cdots \leq j_{\lambda _{2}}\leq n}}%
(x_{i_{1}}\cdots x_{i_{ks-2}}y_{j_{1}}\cdots y_{j_{\lambda
_{2}}})^{l_{i_{1},i_{2},\ldots ,i_{ks-2},j_{1},\ldots ,j_{\lambda _{2}}}}$

$\prod\limits_{\substack{ 1\leq i_{1}\leq i_{2}\leq \cdots \leq i_{ks-1}\leq
m  \\ 1\leq j_{1}\leq \cdots \leq j_{\lambda _{1}}\leq n}}(x_{i_{1}}\cdots
x_{i_{ks-1}}y_{j_{1}}\cdots y_{j_{\lambda _{1}}})^{l_{i_{1},i_{2},\ldots
,i_{ks-1},j_{1},\ldots ,j_{\lambda _{1}}}}$

$\ \prod\limits_{1\leq i_{1}\leq i_{2}\leq \cdots \leq i_{ks}\leq m}\left(
x_{i_{1}}x_{i_{2}}\cdots x_{i_{ks}}\right) ^{l_{i_{1}},_{i_{2}},\ldots
,_{i_{ks}}}$\newline
\ \ \ \ \ \newline
where $\beta \ $is some monomial, $c_{j_{1},\ldots ,j_{kl}}$ and$\
l_{i_{1},\ldots ,i_{t},j_{1},\ldots ,j_{\lambda _{ks-t}}}$ (with $1\leq
t\leq ks$) are nonnegative integers. For $1\leq t\leq ks$ let $%
L_{t}=\sum\limits_{\substack{ 1\leq i_{1}\leq i_{2}\leq \cdots \leq
i_{t}\leq m  \\ 1\leq j_{1}\leq \cdots \leq j_{\lambda _{ks-t}}\leq n}}%
l_{i_{1},\ldots ,i_{t},j_{1},\ldots ,j_{\lambda _{ks-t}}}$ and let $%
C=\sum\limits_{1\leq j_{1}\leq \cdots \leq j_{kl}\leq n}c_{j_{1},\ldots
,j_{kl}}$. By summing powers we have%
\begin{equation}
L_{ks}+L_{ks-1}+\cdots +L_{3}+L_{2}+L_{1}+C=d  \label{(1)}
\end{equation}%
Also, by the total-degree count of the monomial $x_{i_{1}}\cdots
x_{i_{ks-e}} $ we have the following equality\newline
\begin{equation}
(ks)L_{ks}+(ks-1)L_{ks-1}+\cdots +3L_{3}+2L_{2}+L_{1}+\varepsilon =(ks-e)d
\label{(2)}
\end{equation}%
\newline
where $\varepsilon $ is the total-degree of the monomial $x_{i_{1}}\cdots
x_{i_{ks-e}}$ in $\beta $. By the total-degree count of the monomial $%
y_{1}\cdots y_{j_{_{\lambda _{e}}}-1}$ we must have the following inequality%
\newline
\begin{equation}
\lambda _{1}L_{ks-1}+\lambda _{2}L_{ks-2}+\cdots +\lambda
_{ks-3}L_{3}+\lambda _{ks-2}L_{2}+\lambda _{ks-1}L_{1}+Ckl\leq (\lambda
_{e}-1)d  \label{(3)}
\end{equation}%
\ \newline
We finish the proof by showing that $\left( \ref{(1)}\right) $, $\left( \ref%
{(2)}\right) $, and $\left( \ref{(3)}\right) $ can not hold simultaneously.

\ \ \ \ 

From $\left( \ref{(1)}\right) $ and $\left( \ref{(2)}\right) $ 
\begin{equation}
C=(ks-1)L_{ks}+(ks-2)L_{ks-1}+\cdots +2L_{3}+L_{2}+\varepsilon -(ks-e-1)d
\label{(4)}
\end{equation}%
\newline
Recall, $(ks-1)l=ts+r$ with $1\leq r\leq s$ and $\lambda _{ks-1}<\lambda
_{ks}=kl$. Now consider the left-hand side of (\ref{(3)}) 
\begin{eqnarray*}
&&\lambda _{1}L_{ks-1}+\lambda _{2}L_{ks-2}+\cdots +\lambda
_{ks-3}L_{3}+\lambda _{ks-2}L_{2}+\lambda _{ks-1}L_{1}+Ckl \\
&=&\left[ \sum\limits_{i=0}^{ks-1}[kl(ks-1-i)+\lambda _{i}]L_{ks-i}\right]
+\varepsilon kl-kl(ks-e-1)d\text{ \ (By }\left( \text{\ref{(4)}}\right) 
\text{ )} \\
&\geq &\left[ \sum\limits_{i=0}^{ks-1}(ks-i)(\lambda _{ks-1}-\frac{s-r}{s}%
)L_{ks-i}\right] +\varepsilon kl-kl(ks-e-1)d\text{ \ (by Lemma \ref%
{lemda-inequality}\newline
)} \\
&\geq &(\lambda _{ks-1}-\frac{s-r}{s})(ks-e)d-kl(ks-e-1)d\text{ \ \ \ \ \ \
\ \ \ \ \ \ \ \ \ \ \ \ \ \ \ \ (By }\left( \text{\ref{(2)}}\right) \text{ )}
\\
&=&\frac{(ks-1)l}{s}(ks-e)d-kl(ks-e-1)d \\
&=&\left( \dfrac{e}{s}l\right) d \\
&>&(\lambda _{e}-1)d\text{.}
\end{eqnarray*}%
This is a contradiction to $\left( \ref{(3)}\right) $ as required.
\end{proof}

\ \ \ \ \ 

\begin{proof}
\textbf{(of Theorem \ref{MainThm})} The proof follows by the above lemma and
Lemma \ref{J^k=J_k}.
\end{proof}

\ \ \ \ \ 

We have already proved that if $\mathbf{\alpha =}(\alpha _{1},\ldots ,\alpha
_{n})\in \mathbb{N}^{n}$ with the entries of $\mathbf{\alpha }$ consisting
of two positive integers, then $I(\mathbf{\alpha })$, the integral closure
of $(x_{1}^{\alpha _{1}},\ldots ,x_{n}^{\alpha _{n}})\subset K[x_{1},\ldots
,x_{n}]$, is normal. Noting that the ideal $I(x^{4},y^{5},z^{7})\subset
K[x,y,z]$ is not normal, the following question arises: when is $I(\mathbf{%
\alpha })$ normal provided that $\mathbf{\alpha }$ consists of three
distinct positive integers? In the proposition below we give a partial
answer for this question.

\begin{theorem}
(Theorem 5.1, \cite{Reid}) Let $\mathbf{\alpha =}(\alpha _{1},\ldots ,\alpha
_{n})\in \mathbb{N}^{n}$, $c=\func{lcm}(\alpha _{1},\ldots ,\alpha _{n-1})$.
Let $I(\mathbf{\alpha })$ be the integral closure of $(x_{1}^{\alpha
_{1}},\ldots ,x_{n}^{\alpha _{n}})\subset K[x_{1},\ldots ,x_{n}]$ and $I(%
\mathbf{\alpha }^{\prime })$ the integral closure of $(x_{1}^{\alpha
_{1}},\ldots ,x_{n-1}^{\alpha _{n-1}},x_{n}^{\alpha _{n}+c})\subset
K[x_{1},\ldots ,x_{n}]$. If $I(\mathbf{\alpha }^{\prime })$ is normal, then $%
I(\mathbf{\alpha })$ is normal. Conversely, If $I(\mathbf{\alpha })$ is
normal and $\alpha _{n}\geq c$, then $I(\mathbf{\alpha }^{\prime })$ is
normal.
\end{theorem}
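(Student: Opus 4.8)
The plan is to recast normality of $I(\mathbf{\alpha})$ as a lattice-point decomposition property of the Newton polyhedron and then transport decompositions between $NP(I(\mathbf{\alpha}))$ and $NP(I(\mathbf{\alpha}'))$ by moving mass in the last coordinate. Writing $P_{\mathbf{\alpha}}=NP(x_1^{\alpha_1},\dots,x_n^{\alpha_n})=\{a\in\mathbb{R}^n_{\ge0}:\sum_{i=1}^n a_i/\alpha_i\ge1\}$, recall that the exponents of $I(\mathbf{\alpha})$ are exactly $P_{\mathbf{\alpha}}\cap\mathbb{Z}^n$, that $NP$ scales linearly under powers, and hence that $I(\mathbf{\alpha})$ is normal if and only if for every $k\ge1$ each lattice point of $kP_{\mathbf{\alpha}}$ is a sum of $k$ lattice points of $P_{\mathbf{\alpha}}$; the same reformulation applies to $\mathbf{\alpha}'$. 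Two structural facts drive everything: since $\alpha_i\mid c$ for $i<n$, last-coordinate shifts by multiples related to $c$ interact well with the lattice, and since $1/(\alpha_n+c)<1/\alpha_n$ one has $P_{\mathbf{\alpha}'}\subseteq P_{\mathbf{\alpha}}$.

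For the first implication, assume $I(\mathbf{\alpha}')$ is normal and take a lattice point $w\in kP_{\mathbf{\alpha}}$; put $S=\sum_{i<n}w_i/\alpha_i$. First I would lift $w$ into $kP_{\mathbf{\alpha}'}$ by adding a carefully chosen integer $M$ to its last coordinate, $w'=w+Me_n$: a direct computation shows $w'\in kP_{\mathbf{\alpha}'}$ as soon as $M\ge(\alpha_n+c)(k-S)-w_n$, and because $w\in kP_{\mathbf{\alpha}}$ gives $w_n\ge\alpha_n(k-S)$ this threshold is at most $c(k-S)$. Next decompose $w'=\sum_{j=1}^k v_j'$ into lattice points of $P_{\mathbf{\alpha}'}$ using normality of $\mathbf{\alpha}'$, and then push the decomposition back down by subtracting nonnegative integers $c_j$ with $\sum_j c_j=M$ from the last coordinates, setting $v_j=v_j'-c_je_n$. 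The point $v_j$ lies in $P_{\mathbf{\alpha}}$ whenever $c_j\le (v_j')_n\,c/(\alpha_n+c)$, so the push-back succeeds provided $\sum_j\lfloor (v_j')_n\,c/(\alpha_n+c)\rfloor\ge M$; estimating the left side by $c(w_n+M)/(\alpha_n+c)$ and using $w\in kP_{\mathbf{\alpha}}$ shows the feasible window for $M$ is nonempty, so some admissible integer $M$ and distribution $(c_j)$ exist. This yields $w=\sum_j v_j$ as a sum of $k$ lattice points of $P_{\mathbf{\alpha}}$, proving $I(\mathbf{\alpha})$ normal.

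For the converse, assume $I(\mathbf{\alpha})$ is normal and $\alpha_n\ge c$, and take a lattice point $w'\in kP_{\mathbf{\alpha}'}$. Since $P_{\mathbf{\alpha}'}\subseteq P_{\mathbf{\alpha}}$ we already have $w'\in kP_{\mathbf{\alpha}}$, so normality of $\mathbf{\alpha}$ gives a decomposition $w'=\sum_{j=1}^k v_j$ into lattice points of $P_{\mathbf{\alpha}}$. Now each $v_j$ need not lie in the smaller polyhedron $P_{\mathbf{\alpha}'}$, so the step is to redistribute last-coordinate mass among the pieces, raising the last coordinate of deficient pieces and lowering it on pieces carrying a surplus, without leaving the lattice or $kP_{\mathbf{\alpha}}$, until every piece lands in $P_{\mathbf{\alpha}'}$; note that the global inequality $\sum_{i<n}w'_i/\alpha_i+w'_n/(\alpha_n+c)\ge k$ is exactly the sum of the per-piece target constraints, so there is no global obstruction. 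A piece already in $P_{\mathbf{\alpha}}$ has slack controlled by $\alpha_n$, and I expect the hypothesis $\alpha_n\ge c$ to be precisely the inequality guaranteeing that the total available surplus dominates the total deficit, so that a valid integral transfer exists; this mirrors the role played by $w\in kP_{\mathbf{\alpha}}$ in the first part.

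The main obstacle in both directions is the \emph{integral} feasibility of the last-coordinate redistribution: one must subtract, respectively transfer, a prescribed integer total while keeping each summand a lattice point inside the correct polyhedron, whereas the real-number inequalities only control matters up to the floors introduced at each of the $k$ pieces. I would isolate this as a single combinatorial lemma, a discrete transportation or intermediate-value statement asserting that if integers satisfy $0\le c_j\le B_j$ with $\sum_j B_j\ge M$ then $\sum_j c_j=M$ is achievable, and then verify that the divisibility $\alpha_i\mid c$ for $i<n$, together with $\alpha_n\ge c$ in the converse, makes the relevant bounds $B_j$ and the target $M$ line up so that no rounding gap defeats the argument.
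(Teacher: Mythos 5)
The statement you are proving is imported by the paper: it is quoted as Theorem 5.1 of \cite{Reid} and no proof is given in the paper itself, so your attempt can only be judged against what is needed for correctness (and against the argument in the cited source). Your reformulation of normality --- every lattice point of $kP_{\mathbf{\alpha}}$ decomposes as a sum of $k$ lattice points of $P_{\mathbf{\alpha}}$ --- is correct, and so are your two elementary computations (the lifting threshold $M\geq(\alpha_n+c)(k-S)-w_n$ and the per-piece sufficient condition $c_j\leq (v_j')_n\,c/(\alpha_n+c)$). But the heart of the matter, the \emph{integer} feasibility of the redistribution, is precisely what you defer to an unproven lemma, and the estimate you propose provably does not close it. Quantitatively: if one ignores floors, feasibility of the push-down needs $M\leq c\,w_n/\alpha_n$, which is indeed compatible with the lifting threshold; but each of the $k$ floors can cost almost one full unit, and once this loss is charged the window for $M$ becomes $\bigl[(\alpha_n+c)(k-S)-w_n,\ \bigl(c\,w_n-k(\alpha_n+c)\bigr)/\alpha_n\bigr]$, which is nonempty only when $w_n\geq\alpha_n(k-S)+k$. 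For lattice points sitting within $k$ of the facet in the last coordinate (e.g. $w_n=\lceil\alpha_n(k-S)\rceil$) your argument yields nothing, and these are exactly the points where normality is at stake. The same ceiling loss of up to $k$ afflicts the surplus/deficit transfer in your converse direction, where $\alpha_n\geq c$ is invoked only as an expectation.

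The root cause is that your sketch never actually uses the hypothesis that $c$ is a common multiple of $\alpha_1,\ldots,\alpha_{n-1}$: as written, your forward argument would apply verbatim to an arbitrary positive integer $c$, and the theorem is not true at that level of generality --- the lcm hypothesis is what the statement turns on. Divisibility enters not through a transportation lemma but through the lattice-preserving shear $\phi(a)=\bigl(a_1,\ldots,a_{n-1},\,a_n+c-\sum_{i<n}(c/\alpha_i)a_i\bigr)$: because each $c/\alpha_i$ is an integer, $\phi$ is an integral affine bijection of $\mathbb{Z}^n$, and it carries the bounding halfspace $\sum_i a_i/\alpha_i\geq 1$ of $P_{\mathbf{\alpha}}$ onto the halfspace $\sum_{i<n}a_i/\alpha_i+a_n/(\alpha_n+c)\geq 1$ of $P_{\mathbf{\alpha}'}$ (with $\alpha_n\geq c$ controlling the sign of the last coordinate in the converse direction). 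Transporting a decomposition through $\phi$ or $\phi^{-1}$ shifts each summand by an \emph{exact} integer, so no rounding ever occurs; in particular the quantity playing the role of your upper endpoint becomes $c(k-S)$, which is an integer precisely because $cS=\sum_{i<n}(c/\alpha_i)w_i\in\mathbb{Z}$, and nonemptiness of the window reduces exactly to the hypothesis $w\in kP_{\mathbf{\alpha}}$. Unless you replace your uniform-shift-plus-floors scheme by this exact mechanism (or prove that divisibility eliminates the rounding loss, which your current bounds do not do), the proof does not go through.
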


\begin{proposition}
If $\mathbf{\alpha =}(\alpha _{1},\ldots ,\alpha _{n})\in \mathbb{N}^{n}$
with $\alpha _{i}\in \{s,l\}$ for $i=1,\ldots ,n-1$ such that $s$ divides $l$
and $l$ divides $\alpha _{n}$, then $I(\mathbf{\alpha })$ is normal.
\end{proposition}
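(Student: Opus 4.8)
The plan is to reduce the general case to the already-established main theorem by repeatedly applying the ``raising the exponent'' result of Reid et.\ al.\ (Theorem 5.1, \cite{Reid}), which is stated just before this proposition. By Theorem \ref{MainThm} we already know that whenever every exponent lies in a two-element set $\{s',l'\}$, the integral closure of $(x_1^{\alpha_1},\ldots,x_n^{\alpha_n})$ is normal. The entries $\alpha_1,\ldots,\alpha_{n-1}$ already satisfy this with the two values $s$ and $l$, so the only offending entry is $\alpha_n$, which may be a third, larger value. The hypotheses $s\mid l$ and $l\mid\alpha_n$ are exactly what make it possible to ``pull $\alpha_n$ down'' to the value $l$ in finitely many controlled steps, thereby collapsing the three distinct values into the two values $\{s,l\}$.

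First I would observe that since $s\mid l$, we have $\operatorname{lcm}(s,l)=l$, so $c=\operatorname{lcm}(\alpha_1,\ldots,\alpha_{n-1})=l$ regardless of how the values $s$ and $l$ are distributed among the first $n-1$ entries (at least one of them equals $l$; if all equal $s$ the lcm is $s$, and one checks that case reduces even more easily, or one may assume without loss of generality that some $\alpha_i=l$). The key arithmetic point is that $l\mid\alpha_n$, say $\alpha_n=ql$ for some positive integer $q$. I would then apply the converse direction of Theorem 5.1 in reverse: starting from the vector $\boldsymbol{\alpha}$ with last entry $ql$, I repeatedly invoke the forward implication ``if $I(\boldsymbol{\alpha}')$ is normal then $I(\boldsymbol{\alpha})$ is normal'' with $\boldsymbol{\alpha}'$ obtained by adding $c=l$ to the last coordinate. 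Equivalently, and more cleanly, I run the implication chain downward: the vector with last entry $l$ has all entries in $\{s,l\}$, hence $I$ is normal by Theorem \ref{MainThm}; then the forward direction of Theorem 5.1 lifts normality from last entry $l$ to last entry $l+c=2l$, then to $3l$, and so on up to $ql=\alpha_n$.

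The main step to get right is the induction that climbs from $l$ to $\alpha_n=ql$. At each stage the relevant $c$ stays equal to $\operatorname{lcm}(\alpha_1,\ldots,\alpha_{n-1})=l$, because $c$ is computed only from the first $n-1$ entries, which never change; this is what guarantees the increment is always exactly $l$ and the chain $l,2l,3l,\ldots,ql$ is reached without gaps. Thus, writing $\boldsymbol{\alpha}^{(j)}=(\alpha_1,\ldots,\alpha_{n-1},jl)$ for $j=1,\ldots,q$, the forward implication of Theorem 5.1 gives that $I(\boldsymbol{\alpha}^{(j+1)})$ normal $\Rightarrow$ $I(\boldsymbol{\alpha}^{(j)})$ normal; I would instead want the implication in the direction that propagates normality upward, so I use the converse clause, which applies since at each stage $\alpha_n^{(j)}=jl\geq l=c$. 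Either way, a finite induction on $j$ bridges $\boldsymbol{\alpha}^{(1)}$ (handled by Theorem \ref{MainThm}) and $\boldsymbol{\alpha}^{(q)}=\boldsymbol{\alpha}$.

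The only genuine obstacle I anticipate is purely bookkeeping: making sure the hypothesis $\alpha_n\ge c$ needed for the converse direction of Theorem 5.1 holds at every intermediate vector, and confirming that the base case truly has all exponents in a two-element set so that Theorem \ref{MainThm} applies verbatim. The condition $l\mid\alpha_n$ removes any subtlety, since it forces $\alpha_n$ to be an exact multiple of the increment $c=l$, so the ladder of intermediate vectors lands precisely on $\alpha_n$. With these checks in place the argument is a short finite induction and requires no new estimates beyond those already proved; the substantive analytic work was all absorbed into Theorem \ref{MainThm} via Lemmas \ref{lemda-inequality} and \ref{MainLemma}.
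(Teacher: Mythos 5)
Your proposal is correct and is essentially the paper's own argument: an induction on $q=\alpha_n/l$, with the base case $q=1$ handled by Theorem \ref{MainThm} and the inductive step supplied by the converse clause of Reid et al.'s Theorem 5.1 (valid since $jl\geq c=l$ at every stage). After the brief detour about which direction of Theorem 5.1 to invoke, you land on the same converse-clause induction the paper uses, and you even treat the degenerate case where all of $\alpha_1,\ldots,\alpha_{n-1}$ equal $s$ (so $c=s\neq l$) more carefully than the paper does, noting it follows directly from Theorem \ref{MainThm} applied to the pair $\{s,\alpha_n\}$.
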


\begin{proof}
We proceed by induction on the integer $q=\alpha _{n}/l$. By Theorem\textbf{%
\ \ref{MainThm}} the ideal $I(\mathbf{\alpha })$ is normal whenever $q=1$.
Note $l=\func{lcm}\{s,l\}$ as $s$ divides $l$. Assume $I(\mathbf{\alpha })$
is normal for $\mathbf{\alpha =}(\alpha _{1},\ldots ,\alpha _{n-1},ql)$ with 
$\alpha _{i}\in \{s,l\}$ for $i=1,\ldots ,n-1$. Then by the above Theorem $I(%
\mathbf{\alpha }^{\prime })$ is normal where $\mathbf{\alpha }^{\prime }%
\mathbf{=}(\alpha _{1},\ldots ,\alpha _{n-1},ql+l)$.
\end{proof}

\ \ \ \

{\small \ \ \ \ \ \ }

{\small Ibrahim Al-Ayyoub, assistant professor.}

{\small Department of Mathematics and Statistics.}

{\small Jordan University of Science and Technology.}

{\small P O Box 3030, Irbid 22110, Jordan.}

{\small Email address: iayyoub@just.edu.jo}

\end{document}